\def\today{\ifcase\month\or
	January\or February\or March\or April\or May\or June\or
	July\or August\or September\or October\or November\or December\fi
	\space\number\day, \number\year}
\newtheorem{theorem}{Theorem}
\newtheorem{lemma}[theorem]{Lemma}
\newtheorem{corollary}[theorem]{Corollary}
\theoremstyle{definition}
\theoremstyle{remark}
\newtheorem{remark}[theorem]{Remark}
\newcommand{\de}[0]{\mathrel{\mathop:}=}
\newcommand{\ie}[0]{\mathrm{i}}
\newcommand{\dif}[1]{\mathrm{d}#1}
\newcommand{\R}{\mathbb{R}}
\newcommand{\N}{\mathbb{N}}
\newcommand{\p}{\varphi}
\newcommand{\hh}{\tfrac12}
\newcommand{\al}{\alpha}
\newcommand{\be}{\beta}
\newcommand{\si}{\sigma}
\begin{document}
	
	\title[Conditional estimates for the logarithmic derivative of Dirichlet $L$-functions]{Conditional estimates for the logarithmic derivative \\ of Dirichlet $L$-functions}
	\author[A.~Chirre, M.~V.~Hagen and A.~Simoni\v{c}]{Andr\'es Chirre, Markus Val{\aa}s Hagen and Aleksander Simoni\v{c}}
	\subjclass[2010]{11M06, 11M26, 41A30}
	\keywords{Dirichlet $L$-functions, Generalized Riemann Hypothesis, bandlimited functions}
	\address{Departamento de Ciencias - Sección Matemáticas, Pontificia Universidad Católica del Perú, Lima, Perú}
	\email{cchirre@pucp.edu.pe}
	\address{Department of Mathematical Sciences, Norwegian University of Science and Technology, NO-7491 Trondheim, Norway}
	\email{markus.v.hagen@ntnu.no}
	\address{School of Science, The University of New South Wales (Canberra), ACT, Australia}
	\email{a.simonic@adfa.edu.au}
	\allowdisplaybreaks
	\numberwithin{equation}{section}

	\begin{abstract}
		Assuming the Generalized Riemann Hypothesis, we establish explicit bounds in the $q$-aspect for the logarithmic derivative $\left(L'/L\right)\left(\sigma,\chi\right)$ of Dirichlet $L$-functions, where $\chi$ is a primitive character modulo $q\geq 10^{30}$ and $1/2+1/\log{\log{q}}\leq\sigma\leq 1-1/\log\log q$. In addition, for $\sigma=1$ we improve upon the result by Ihara, Murty and Shimura (2009). Similar results for the logarithmic derivative of the Riemann zeta-function are given.
	\end{abstract}
	
	\maketitle
	\thispagestyle{empty}
	
	\section{Introduction}
		
	Let $\zeta(s)$ be the Riemann zeta-function and $s=\sigma+\ie t$, where $\sigma$ and $t$ are real numbers. It is well-known that for $\sigma>1$ the logarithmic derivative of the zeta-function admits an expansion into the Dirichlet series
	\begin{equation*}
	\frac{\zeta'}{\zeta}(s) = -\sum_{n=1}^{\infty}\frac{\Lambda(n)}{n^{s}},
	\end{equation*}
	where $\Lambda(n)$ is the von Mangoldt function. A classical result due to Littlewood ($1924$) asserts that the Riemann Hypothesis (RH) implies
	\begin{equation}
		\label{eq:Littlewood}
		\frac{\zeta'}{\zeta}(s) \ll \left((\log t)^{2-2\sigma}+1\right)\min\left\{\dfrac{1}{|\sigma-1|},\log\log t\right\}
	\end{equation}
	for $1/2+1/\log{\log{t}}\leq\sigma\leq3/2$ and $t$ large, see~\cite[Corollary 13.14]{MV}. In particular, $\left(\zeta'/\zeta\right)(s)\ll \left(\log{t}\right)^{2-2\sigma}$ for $1/2+\delta\leq\sigma\leq1-\delta$ and any fixed $\delta\in(0,1/4)$, and $\left(\zeta'/\zeta\right)(1+\ie t)\ll \log{\log{t}}$. The shape of these bounds has never been improved, and efforts have been placed into obtaining explicit constants for the main terms. Recently, an explicit bound for~\eqref{eq:Littlewood} has been given by Gon\c{c}alves and the first author in~\cite[Theorem 1]{chirreGoncalves}.
	
Similar results hold for $L$-functions. Let $\chi$ be a primitive character modulo $q$, and let $L(s,\chi)$ be the associated Dirichlet $L$-function. From now on we will assume that the Generalized Riemann Hypothesis (GRH) holds, i.e., all non-trivial zeros of $L(s,\chi)$ have the form $\rho_\chi = \frac{1}{2}+\ie\gamma_\chi$ for $\gamma_\chi\in\R$.
	
In $2009$, Ihara, Murty and Shimura~\cite[Corollary 3.3.2]{IharaMurtyShimura} proved under GRH that
	\begin{equation*}
		\left|\frac{L'}{L}\left(1,\chi\right)\right| \leq 2\log{\log{q}} + 2\left(1-\log{2}\right) + O\left(\frac{\log\log q}{\log{q}}\right).
	\end{equation*}
The implicit constant in the error term is stated explicitly. Note that $2\left(1-\log{2}\right)= 0.613\dots$. In the present paper we improve the latter result to the following.
	
	\begin{theorem}
		\label{thm:LogDerL}
		Assume the Generalized Riemann Hypothesis. Let $\chi$ be a primitive character modulo $q\geq 10^{30}$. Then
		\[
		\left|\dfrac{L'}{L}(1,\chi)\right|\leq 2\log \log q - 0.4989 + 5.91\frac{\left(\log{\log{q}}\right)^2}{\log{q}}.
		\]
		Also, for $q\geq 10^{153}$ we have $\left|\left(L'/L\right)(1,\chi)\right|\leq 2\log{\log{q}}$.
	\end{theorem}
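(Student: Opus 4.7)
I would start from the Dirichlet series representation
\[
-\frac{L'}{L}(1+s,\chi)=\sum_{n\ge 1}\frac{\Lambda(n)\chi(n)}{n^{1+s}}\qquad(\operatorname{Re} s>0)
\]
and convolve with a smooth weight $F\colon[0,\infty)\to[0,1]$ with $F(0)=1$ and $\operatorname{supp} F\subseteq[0,\Delta]$. A Mellin--Perron computation then gives an identity
\[
\sum_{n\ge 2}\frac{\Lambda(n)\chi(n)}{n}F(\log n)=\frac{1}{2\pi\ie}\int_{(c)}\frac{L'}{L}(1+s,\chi)\,\widehat F(s)\,\mathrm{d}s,\qquad c>0,
\]
for a suitable transform $\widehat F$ of $F$. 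Under GRH the integrand extends meromorphically past $s=0$ into $\operatorname{Re} s=-\tfrac12-\varepsilon$, with poles at $s=0$ (residue $(L'/L)(1,\chi)$) and at $s=\rho-1$ for each nontrivial zero $\rho=\hh+\ie\gamma$ of $L(s,\chi)$. Shifting the contour yields
\[
\frac{L'}{L}(1,\chi)=-\sum_{n\ge 2}\frac{\Lambda(n)\chi(n)}{n}F(\log n)-\sum_{\rho}\widehat F(\rho-1)+R(\chi,F),
\]
with $R$ a remainder integral on $\operatorname{Re} s=-\tfrac12-\varepsilon$ that will be negligible.

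\textbf{The three pieces.} For the prime sum, partial summation against an explicit Mertens-type estimate $\sum_{n\le x}\Lambda(n)/n\le\log x+M+o(1)$ produces a bound of the shape $\Delta+M$ after integration against $F$. For the zero sum, the band-limit of $F$ forces $\widehat F(-\hh+\ie\gamma)$ to decay rapidly in $|\gamma|$; combined with the Riemann--von Mangoldt bound $N(T,\chi)\le (T/\pi)\log(qT/2\pi e)+O(\log qT)$, one obtains $\bigl|\sum_\rho \widehat F(\rho-1)\bigr|\ll e^{-\Delta/2}\log q$. The remainder $R$ is controlled by the functional equation together with standard uniform estimates for $L'/L$ to the left of the critical line. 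Balancing the two main contributions by taking $\Delta$ of order $2\log\log q$ makes the zero sum $O(1)$ while the prime sum produces the leading $2\log\log q$, which gives the correct order of magnitude of the bound.

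\textbf{Sharp constant and main obstacle.} To achieve the explicit constant $-0.4989$, the weight $F$ must be chosen as an optimal bandlimited smoothing of $\mathbf{1}_{[0,\Delta]}$ of Beurling--Selberg / Carneiro--Vaaler type, so that $\widehat F$ has extremal decay while the Mertens-type contribution $\int F$ is as small as possible. Tracking all constants carefully---an explicit Mertens theorem, the Riemann--von Mangoldt error, and the archimedean contribution to $R$---and then jointly optimizing over $\Delta$ and over the profile of $F$ yields the numerical values $-0.4989$ and $5.91$, with the hypothesis $q\ge 10^{30}$ ensuring that the lower-order remainders stay under control. The second statement then follows from the first, since $5.91(\log\log q)^2/\log q<0.4989$ once $q$ is slightly larger, with the threshold $10^{153}$ fixed by the effective constants involved. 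The main obstacle is exactly this joint optimization: the prime sum prefers a sharp cutoff (small $\Delta$, small $\int F$) whereas the zero sum demands a smooth $F$ so that $\widehat F$ decays; resolving this tension through the extremal bandlimited-function machinery, and doing so with fully explicit constants, is the technical heart of the argument.
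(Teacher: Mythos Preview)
Your overall strategy---write $(L'/L)(1,\chi)$ via an explicit formula with a weighted prime sum and a sum over zeros, then balance the cutoff near $2\log\log q$---is the same as the paper's, but your implementation is more elaborate than what the paper actually does, and there is a terminological slip worth fixing.

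The paper does not use a Beurling--Selberg extremal weight for this theorem. It invokes Selberg's moment formula directly, i.e.\ the piecewise-linear (trapezoidal) weight $\Lambda_{x,y}$ with $xy=\log^{2}q$ and $y=e^{2\lambda}$; the kernel on the zero side is then exactly $\bigl(x^{\rho-1}-(xy)^{\rho-1}\bigr)/\bigl((\rho-1)^{2}\log y\bigr)$, whose built-in factor $(\rho-1)^{-2}$ already produces the convergent sum $\sum_{\gamma_\chi}(\tfrac14+\gamma_\chi^{2})^{-1}$ without any smoothness optimisation. That zero sum is then \emph{not} estimated via Riemann--von Mangoldt and partial summation, but through the identity $\sum_{\gamma_\chi}\frac{1/2}{1/4+\gamma_\chi^{2}}=\left|\Re\{B(\chi)\}\right|$ combined with the explicit bound of Lamzouri--Li--Soundararajan, which gives the leading $\tfrac12\log q$ with clean secondary terms. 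The prime sum is handled by an explicit Mertens estimate $\sum_{n\le x}\Lambda(n)/n\le\log x-\gamma+O(x^{-1/2}\log^{2}x)$ under RH. The constant $-0.4989$ then drops out of a \emph{one-parameter} minimisation: the prime piece contributes $-\gamma-\lambda$, the zero piece contributes $(e^{\lambda}+1)/(2\lambda)$, and $\lambda=2.1862$ minimises their sum. Extremal-function machinery does not enter Theorem~\ref{thm:LogDerL} at all; in the paper it is used only for the range $\tfrac12<\sigma<1$ (Theorem~\ref{thm:LogDerL2}) and for Theorem~\ref{0_30am}.

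One point to tighten in your write-up: you take $F$ with $\supp F\subseteq[0,\Delta]$ and later call it ``bandlimited''; a nonzero function cannot be both. Moreover, the smallness of the zero sum is not due to rapid decay of $\widehat F(-\tfrac12+\ie\gamma)$ in $|\gamma|$ coming from a band-limit; it comes from the real-part factor $e^{-\Delta/2}$ (since $\Re(\rho-1)=-\tfrac12$) together with a kernel that already decays like $|\gamma|^{-2}$. Your route could in principle be carried through, but as written it over-engineers the weight and misattributes the source of the savings; the paper's argument is shorter and more transparent, and the specific numbers $-0.4989$ and $5.91$ are tied to the trapezoidal Selberg kernel, not to an extremal-function optimisation.
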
	
	
Numerical considerations concerning the extremal values of $M_q=\max_{\chi\neq\chi_{0}}\left\{\left|(L'/L)(1,\chi)\right|\right\}$ for odd prime numbers $q\leq 10^7$ can be found in~\cite{LanguascoRighi}, see also~\cite{LamzouriLanguasco} and~\cite{LanguascoEfficient}. Theorem~\ref{thm:LogDerL} may be used to obtain conditional and effective results in the distribution of prime numbers in arithmetic progressions. Here, one of the necessary ingredients is also an estimate on $b(\chi)$, i.e., the constant term in the Laurent expansion of $\left(L'/L\right)(s,\chi)$ at $s=0$. Bennett et al.~\cite[Proposition 1.12]{BennettAP} proved unconditionally that $\left|b(\chi)\right|\leq 0.2515q\log{q}$ for a Dirichlet character $\chi$ modulo $q\geq10^{5}$. From~\cite[Equation 10.35]{MV} we see that
$$
b(\chi)=-\dfrac{L'}{L}(1,\overline{\chi})-\log \dfrac{q}{2\pi}+\gamma,
$$
where $\overline{\chi}$ is the conjugate Dirichlet character and $\gamma$ is the Euler--Mascheroni constant.

\begin{corollary}
Assume the Generalized Riemann Hypothesis. Let $\chi$ be a primitive character modulo $q\geq 10^{30}$. Then $\left|b(\chi)\right|\leq \log{q}+2\log{\log{q}}-0.224$.
\end{corollary}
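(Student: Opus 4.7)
The plan is to start from the identity
\[
b(\chi) = -\frac{L'}{L}(1,\overline{\chi}) - \log\frac{q}{2\pi} + \gamma
\]
supplied just above the corollary, apply the triangle inequality, and invoke Theorem~\ref{thm:LogDerL} for the primitive character $\overline{\chi}$ modulo $q$. Since both $\log(q/(2\pi))$ and $\gamma$ are positive for $q\geq 10^{30}$, this yields
\[
|b(\chi)| \leq \left|\frac{L'}{L}(1,\overline{\chi})\right| + \log q - \log(2\pi) + \gamma.
\]

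Substituting the explicit bound from Theorem~\ref{thm:LogDerL} and rearranging, the claimed inequality reduces to verifying
\[
5.91\,\frac{(\log\log q)^{2}}{\log q} \;\leq\; \log(2\pi) - \gamma + 0.4989 - 0.224 \;\approx\; 1.5356.
\]
To finish, I would note that the substitution $v=\log\log q$ turns the left-hand side into $5.91\,v^{2}e^{-v}$, whose $v$-derivative $5.91\,v(2-v)e^{-v}$ is negative for $v>2$; hence the left-hand side is monotonically decreasing in $q$ throughout the range $q\geq 10^{30}$, where $v\approx 4.235$. A single evaluation at the endpoint $q=10^{30}$ then gives $5.91\cdot(4.2349)^{2}/69.0776 \approx 1.5345$, which just fits under $1.5356$ and completes the argument.

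The main obstacle is purely numerical tightness rather than anything conceptual: the threshold $q=10^{30}$ from Theorem~\ref{thm:LogDerL} is only barely small enough for the error term $5.91(\log\log q)^{2}/\log q$ to be absorbed into the gap between $-0.4989$ and $-0.224+\gamma-\log(2\pi)$. Any future sharpening of the constant $-0.224$ would therefore have to come from improving the constants inside Theorem~\ref{thm:LogDerL} itself, not from the rather short derivation here.
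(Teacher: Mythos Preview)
Your argument is correct and is exactly the derivation the paper has in mind: the corollary is stated immediately after the identity for $b(\chi)$ and is meant to follow from it together with Theorem~\ref{thm:LogDerL} applied to the primitive character $\overline{\chi}$, precisely as you do. The paper gives no separate proof beyond this.

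One small remark: you split all three terms by the triangle inequality, obtaining $+\gamma$ on the right. If instead you group the two constants and use $\bigl|-\log\tfrac{q}{2\pi}+\gamma\bigr|=\log\tfrac{q}{2\pi}-\gamma$ for $q\geq 10^{30}$, the threshold to verify becomes $\log(2\pi)+\gamma+0.4989-0.224\approx 2.690$ rather than $1.5356$, so the numerics are far less delicate. Your looser bound still clears the bar, so nothing is wrong, but the tighter grouping removes the ``barely fits'' feeling you noted.
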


This improves the conditional estimate in \cite[Lemma 17]{EHP} for $q\geq 10^{30}$. Our next theorem provides effective and conditional estimates of the form~\eqref{eq:Littlewood} for Dirichlet $L$-functions of primitive characters modulo $q$ and $s=\sigma\in\R$ in the range\footnote{Following the same approach it is possible to get $\left(L'/L\right)(\sigma,\chi)\ll \log\log q$ in the range $1-1/\log\log q\leq \sigma\leq 1$.} $1/2+1/\log{\log{q}}\leq \sigma\leq 1-1/\log\log q$.
	
	\begin{theorem}
		\label{thm:LogDerL2}
		Assume the Generalized Riemann Hypothesis. Let $\chi$ be a primitive character modulo $q\geq 10^{30}$. For the range
			\begin{align}
				\label{20_31pm}
				\dfrac{1}{2}+\dfrac{1}{\log\log q}\leq \sigma\leq 1-\dfrac{1}{\log\log q},
			\end{align}
			we have
			\[
			\left|\dfrac{L'}{L}(\sigma,\chi)\right|\leq A_{\sigma}(\log q)^{2-2\sigma} - \dfrac{\sigma2^{1-\sigma}}{1-\sigma} + \frac{5.561\left(\log{q}\right)^{3-4\sigma}}{1-\sigma}
			+ \frac{0.306\left(\log{\log{q}}\right)^2}{2\sigma-1},
			\]
			where
			\begin{equation}
            \label{eq:Asigma}
			A_{\sigma} \de \dfrac{2(2\sigma-1)\left(1-\exp\left(-\frac{3(1-\sigma)}{2\left(2\sigma-1\right)}\right)\right)}{3(1-\sigma)^2} + 2.079.
			\end{equation}
	\end{theorem}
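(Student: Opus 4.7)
My plan is to start from the classical Hadamard-type partial fraction
$$\frac{L'}{L}(\sigma,\chi)=-\tfrac12\log\frac{q}{\pi}-\tfrac12\frac{\Gamma'}{\Gamma}\!\left(\frac{\sigma+\mathfrak{a}}{2}\right)+\sum_{\rho}\frac{1}{\sigma-\rho}+O(1),$$
valid under GRH once the convergence-forcing $1/\rho$ contributions are absorbed via $\mathrm{Re}\,B(\chi)=-\sum_\rho\mathrm{Re}(1/\rho)$. Writing every nontrivial zero as $\rho=\tfrac12+\ie\gamma$, the sum on the right splits into the positive Poisson-kernel piece
$$K_\sigma(\gamma)\de\frac{\sigma-1/2}{(\sigma-1/2)^2+\gamma^2}$$
and the odd kernel $\gamma/\bigl((\sigma-1/2)^2+\gamma^2\bigr)$. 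The bound on $|L'/L(\sigma,\chi)|$ will be obtained by estimating these two sums separately and combining them with the archimedean piece, whose exact digamma evaluation should produce the subtracted term $-\sigma\,2^{1-\sigma}/(1-\sigma)$.

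The workhorse is the Guinand--Weil explicit formula applied to Beurling--Selberg extremal functions. Since $K_\sigma$ is not bandlimited, I would construct an even majorant $M^+_{\sigma,\Delta}\ge K_\sigma$ whose Fourier transform is supported in $[-\Delta,\Delta]$, with extremal $L^1$-norm --- this is precisely the extremal problem for the Poisson kernel used by Gon\c{c}alves and the first author in~\cite{chirreGoncalves} to prove the analogous bound for $\zeta'/\zeta$. Feeding $M^+_{\sigma,\Delta}$ into the explicit formula turns the spectral sum into
$$\sum_\gamma M^+_{\sigma,\Delta}(\gamma)=\frac{\widehat{M^+_{\sigma,\Delta}}(0)}{2\pi}\log\frac{q}{\pi}+\text{(digamma integral)}-\frac{1}{\pi}\sum_{n\le e^{2\pi\Delta}}\frac{\Lambda(n)}{\sqrt n}\,\mathrm{Re}\!\left(\chi(n)\,\widehat{M^+_{\sigma,\Delta}}\!\left(\tfrac{\log n}{2\pi}\right)\right).$$
Choosing $\Delta$ of order $(2\sigma-1)\log q$ (so that $e^{2\pi\Delta}$ is a modest power of $\log q$) makes the $L^1$-mass of the majorant produce the leading term $A_\sigma(\log q)^{2-2\sigma}$ with $A_\sigma$ exactly as in~\eqref{eq:Asigma}, while the prime sum, controlled by explicit Chebyshev/Mertens estimates for $\sum_n\Lambda(n)n^{-\sigma}$, contributes the $5.561(\log q)^{3-4\sigma}/(1-\sigma)$ term.

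The imaginary contribution $\sum_\gamma\gamma/\bigl((\sigma-1/2)^2+\gamma^2\bigr)$ is the sum of an odd function over zeros which do not come in conjugate pairs (for complex $\chi$); I would treat it by a parallel extremal construction for the conjugate Poisson kernel, applied through the explicit formula with an odd test function. This produces the $0.306(\log\log q)^2/(2\sigma-1)$ term, and the factor $(2\sigma-1)^{-1}$ --- reflecting the degeneration of $K_\sigma$ to a Dirac mass as $\sigma\to\tfrac12$ --- is exactly what forces the lower endpoint $\sigma\ge\tfrac12+1/\log\log q$ in the hypothesis~\eqref{20_31pm}.

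The main obstacle will not be conceptual but quantitative: one has to choose $\Delta$ so that (i) the Beurling--Selberg $L^1$-extremal constant reproduces the precise value $A_\sigma$ in~\eqref{eq:Asigma}, with the unavoidable additive $2.079$ arising from the ``defect'' of the bandlimited majorant relative to the true Poisson kernel; (ii) the explicit length $e^{2\pi\Delta}$ of the prime sum is short enough for Chebyshev-type estimates to yield the stated numerical constants; and (iii) the $-\tfrac12\log(q/\pi)$ and digamma contributions cancel cleanly against the $L^1$-mass of $M^+_{\sigma,\Delta}$, leaving only the corrections advertised in the theorem. Balancing these three competing constraints uniformly across the range~\eqref{20_31pm}, while tracking every numerical constant through Stirling's formula and explicit Mertens bounds, is the real work of the argument.
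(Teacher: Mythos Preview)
Your plan diverges from the paper's proof at the very first step, and this divergence is not cosmetic: the specific constants in the statement are artifacts of a formula you have not written down.

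The paper does \emph{not} start from the Hadamard partial fraction expansion. It starts from Selberg's moment formula
\[
\frac{L'}{L}(\sigma,\chi)=-\sum_{n\le xy}\frac{\Lambda_{x,y}(n)\chi(n)}{n^\sigma}
+\frac{1}{\log y}\sum_{\rho_\chi}\frac{x^{\rho_\chi-\sigma}-(xy)^{\rho_\chi-\sigma}}{(\rho_\chi-\sigma)^2}+\text{(trivial zeros)},
\]
with $y=\exp(\lambda/(\sigma-\tfrac12))$, $x=y^{-1}\log^2 q$, and $\lambda=\tfrac34$. This already bounds the \emph{modulus} of $L'/L$ without splitting into real and imaginary parts; there is no odd conjugate-Poisson kernel to handle. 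The zero sum is controlled by $\frac{e^\lambda+1}{\lambda}(\log q)^{1-2\sigma}\sum_{\gamma_\chi}\frac{\sigma-1/2}{(\sigma-1/2)^2+\gamma_\chi^2}$, and only \emph{this} inner Poisson sum is treated by the Beurling--Selberg majorant and Guinand--Weil (with $\pi\Delta=\log\log q$, not $\Delta\asymp(2\sigma-1)\log q$), yielding the simple bound $\tfrac12\log q+\frac{1.338}{1-\sigma}(\log q)^{2-2\sigma}$.

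As a consequence, your attributions of the individual terms are almost all wrong. The additive $2.079$ in $A_\sigma$ is $\tfrac{2}{3}(e^{3/4}+1)=\frac{e^\lambda+1}{2\lambda}$, coming from the Selberg-formula prefactor times the $\tfrac12\log q$ piece of the zero sum --- it is not a Beurling--Selberg ``defect''. The first summand of $A_\sigma$ is $B_{\sigma,3/4}$, arising from $\frac{(xy)^{1-\sigma}-x^{1-\sigma}}{(1-\sigma)^2\log y}$ in the \emph{prime} sum, not from any $L^1$-mass of a majorant. The term $-\sigma 2^{1-\sigma}/(1-\sigma)$ is the boundary contribution at $u=2$ in the Abel-summation estimate for $\sum_{n\le x}\Lambda(n)n^{-\sigma}$; the digamma term is $O(1)$ on $(\tfrac12,1)$ and cannot produce it. The $5.561(\log q)^{3-4\sigma}/(1-\sigma)$ comes from the secondary piece of the zero sum, and the $0.306(\log\log q)^2/(2\sigma-1)$ from two error terms in the prime sum --- not from any odd-kernel argument.

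Your route via raw partial fractions and separate real/imaginary estimates is essentially the method of~\cite{Chi} and Theorem~\ref{0_30am}; it does give bounds of the right shape, but with the leading constant $\frac{-\sigma^2+3\sigma-1}{\sigma(1-\sigma)}$ for the real part (plus a separate constant for the imaginary part), \emph{not} $A_\sigma$. So even if executed perfectly, your plan would prove a different inequality, not the one stated.
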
	

Recently, explicit and conditional results on the logarithmic derivative of $L$-functions in the Selberg class of functions with a polynomial Euler product were published in~\cite{SimonicLfunctions}. However, they are worse than~\eqref{eq:Littlewood}. See also~\cite{Chandee,Sound,Tim,SimonicCS,SimonicSonRH} for other similar results, and the recent work of N. Paloj\"{a}rvi and the third author in \cite{SimonicPalo}.

	
	It was established in~\cite[Theorem 1]{Chi} by means of bandlimited functions that for
	\begin{align}
		\label{0_13am}
		\dfrac{1}{2}+\dfrac{1}{\log\log q} \leq  \sigma  \leq 1 - \frac{1}{\sqrt{\log\log{q}}},
	\end{align}
	and sufficiently large $q$ one has
	\begin{equation*}
		\left|\Re\left\{\dfrac{L'}{L}(\sigma,\chi)\right\}\right| \leq \left(\dfrac{-\si^2 + 3\si - 1}{\sigma(1-\sigma)}\right) (\log q)^{2-2\si}
+ O\left(\frac{\left(\log q\right)^{2-2\si}}{\left(\sigma-\hh\right)(1-\si)^2\log\log{q}}\right).
	\end{equation*}
	Following~\cite{chirreGoncalves}, we provide a similar estimate for the imaginary part.
	
	\begin{theorem}
		\label{0_30am}
		Assume the Generalized Riemann Hypothesis. Let $\chi$ be a primitive character modulo $q$. For sufficiently large $q$ in the range \eqref{0_13am} we have
		\begin{equation*}
			\left|\Im\left\{\dfrac{L'}{L}(\sigma,\chi)\right\}\right| \leq  \sqrt{\frac{2(-\si^2 + 3\si - 1)^2(-\si^2 + \si + 1)}{\si^3(1-\si)^2(2-\si)}}{(\log q)^{2-2\sigma}} + O\left(\frac{\left|\log\left(\sigma-\hh\right)\right|(\log q)^{2-2\si}}{\left(\sigma-\hh\right)(1-\si)^2\log\log{q}}\right).
		\end{equation*}
	\end{theorem}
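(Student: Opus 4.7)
The plan is to follow the approach of Chirre and Gon\c{c}alves~\cite{chirreGoncalves}, who established the analogous imaginary-part bound for the Riemann zeta-function, and transplant it to the Dirichlet $L$-function setting. Three ingredients combine: the Guinand--Weil explicit formula, the Beurling--Selberg extremal bandlimited majorant theory used in the real-part theorem of~\cite{Chi}, and a Cauchy--Schwarz step that extracts the cancellation inherent in the odd zero-sum defining $\Im\{L'/L(\sigma,\chi)\}$.

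First I would start from the logarithmic derivative of the Hadamard product of $\xi(s,\chi)$ and take the imaginary part at $s=\sigma\in\R$. Under GRH, with $\rho=\tfrac12+\ie\gamma$, combining the terms $1/(\sigma-\rho)+1/\rho$ into an absolutely convergent series yields
\[
\Im\!\left\{\frac{L'}{L}(\sigma,\chi)\right\}
=\sigma(1-\sigma)\sum_\gamma \frac{\gamma}{\bigl((\sigma-\tfrac12)^2+\gamma^2\bigr)\bigl(\tfrac14+\gamma^2\bigr)}
+\Im B(\chi),
\]
where the $B(\chi)$-term, together with the $\Gamma$-factor and $\log(q/\pi)$ contributions from the functional equation, is a smooth correction that can be controlled and absorbed into the final error.

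Next I apply the Cauchy--Schwarz inequality
\[
\Bigl|\sum_\gamma a(\gamma)\Bigr|^2 \leq \Bigl(\sum_\gamma b(\gamma)\Bigr)\Bigl(\sum_\gamma \frac{a(\gamma)^2}{b(\gamma)}\Bigr),
\]
with $a(\gamma)=\gamma/[((\sigma-\tfrac12)^2+\gamma^2)(\tfrac14+\gamma^2)]$ and the weight $b(\gamma)$ chosen so that the first factor coincides with the Poisson-kernel sum $\sum_\gamma K_\sigma(\gamma)$ controlled by Theorem~1 of~\cite{Chi}, where $K_\sigma(\gamma):=(\sigma-\tfrac12)/((\sigma-\tfrac12)^2+\gamma^2)$; that theorem uses the Beurling--Selberg majorant of $K_\sigma$ in the Guinand--Weil formula with the bandwidth optimized as a function of $\log q$, delivering the leading constant $(-\sigma^2+3\sigma-1)/[\sigma(1-\sigma)]$ at the sharp $(\log q)^{2-2\sigma}$ scale. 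The second factor $\sum_\gamma a(\gamma)^2/b(\gamma)$ is then handled within the same bandlimited framework applied to a different test function; after an elementary contour-integral evaluation of $\int \gamma^2/[((\sigma-\tfrac12)^2+\gamma^2)(\tfrac14+\gamma^2)^2]\,\d\gamma$ (which by residues at $\gamma=\pm\ie(\sigma-\tfrac12)$ and $\pm\ie/2$ collapses to $\pi/\sigma^2$) and the bandwidth optimization, one extracts the rational factor $(-\sigma^2+\sigma+1)/[\sigma(2-\sigma)]$, the numerical multiplier $2$, and a matching $(\log q)^{2-2\sigma}$ power, paralleling the corresponding computation in~\cite{chirreGoncalves}.

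Multiplying the two Cauchy--Schwarz bounds, taking the square root and including the $\sigma(1-\sigma)$ prefactor from the first step then produces the announced leading term
\[
\sqrt{\frac{2(-\sigma^2+3\sigma-1)^2(-\sigma^2+\sigma+1)}{\sigma^3(1-\sigma)^2(2-\sigma)}}\,(\log q)^{2-2\sigma},
\]
the extra square-root factor $\sqrt{2(-\sigma^2+\sigma+1)/[\sigma(2-\sigma)]}$ being precisely the geometric penalty for passing from the real to the imaginary part. The main obstacle is the sharp evaluation of the second Cauchy--Schwarz factor: a naive density-based estimate only gives a $\log q$ multiplier, so recovering the expected $(\log q)^{2-2\sigma}$ scaling requires a coordinated bandwidth choice between the two factors, carefully adapting the extremal analysis of~\cite{Chi,chirreGoncalves} to the new test function. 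Error propagation is then routine, with the $|\log(\sigma-\tfrac12)|$ in the stated $O$-expression arising from the asymptotic behaviour of the bandwidth optimization as $\sigma$ approaches the critical line.
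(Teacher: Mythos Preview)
Your proposal misidentifies the method of \cite{chirreGoncalves}. Neither that paper nor the present one applies Cauchy--Schwarz to the odd zero-sum. The actual argument is a Landau--Kolmogorov-type interpolation: set $\varphi(t)=-\log|L(\sigma+\ie t,\chi)|$, so that $\varphi'(0)=\Im\{(L'/L)(\sigma,\chi)\}$ and $\varphi''(t)=\Re\{(L'/L)'(\sigma+\ie t,\chi)\}$. One first proves two-sided bounds $-\beta\le\varphi''(t)\le\alpha$ for $|t|\le\tfrac12$ by inserting the bandlimited extremal functions for $g_a(x)=(x^2-a^2)/(x^2+a^2)^2$ into the Guinand--Weil formula (this is where the constants $2(-\sigma^2+\sigma+1)/[\sigma(1-\sigma)]$ and $2(-\sigma^2+3\sigma-1)/[\sigma(1-\sigma)]$ arise), and combines these with the bound $|\varphi(t)|\le\gamma$ from \cite[Theorem~1]{Chi}. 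A mean-value-plus-averaging step then yields $|\varphi'(0)|\le 2\sqrt{\alpha\beta\gamma/(\alpha+\beta)}$, and substituting the leading terms of $\alpha,\beta,\gamma$ produces exactly the stated coefficient; the factor $(2-\sigma)$ in the denominator comes from $\alpha+\beta$, not from any residue calculation.

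Your Cauchy--Schwarz route, as written, has a genuine gap at the scaling step. With $b(\gamma)=K_\sigma(\gamma)=(\sigma-\tfrac12)/\bigl((\sigma-\tfrac12)^2+\gamma^2\bigr)$ the first factor is
\[
\sum_{\gamma_\chi} K_\sigma(\gamma_\chi)=\Re\Bigl\{\tfrac{L'}{L}(\sigma,\chi)\Bigr\}+\tfrac12\log\tfrac{q}{\pi}+O(1)\sim\tfrac12\log q,
\]
not $(\log q)^{2-2\sigma}$: the bound in \cite{Chi} is on $\Re\{L'/L\}$, which equals the Poisson sum \emph{minus} $\tfrac12\log(q/\pi)$, not on the Poisson sum itself. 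Hence the Cauchy--Schwarz product overshoots the target by a full power of $\log q$, and no choice of bandwidth repairs this. Separately, the term $\Im B(\chi)$ cannot simply be ``absorbed'': unlike $\Re B(\chi)=-\sum_{\rho_\chi}\Re(1/\rho_\chi)$, the imaginary part has no such closed expression, and for complex $\chi$ the zeros of $L(s,\chi)$ are not symmetric about the real axis, so there is no a priori reason it is small. The paper avoids both obstacles entirely by never writing $\Im\{L'/L\}$ as a zero-sum and instead recovering it as the derivative of $-\log|L|$.
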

	
In comparison to Theorem~\ref{thm:LogDerL}, we are able to provide a similar conditional estimate also for the Riemann zeta-function on the $1$-line.

\begin{theorem}
	\label{thm:LogDerZeta}
	Assume the Riemann Hypothesis. For $t\geq 10^{30}$, we have
	\[
	\left|\frac{\zeta'}{\zeta}\left(1+\ie t\right)\right| \leq 2\log \log t - 0.4989 + 5.35\frac{\left(\log{\log{t}}\right)^2}{\log{t}}.
	\]
	Also, for $t\geq 10^{137}$, we have $\left|\left(\zeta'/\zeta\right)(1+\ie t)\right|\leq 2\log{\log{t}}$.
\end{theorem}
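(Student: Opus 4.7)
The plan is to adapt the proof of Theorem~\ref{thm:LogDerL} to the Riemann zeta-function, with the height $t$ playing the role of the conductor $q$. Under RH, the Hadamard product for $\zeta$ relates $\Re(\zeta'/\zeta)(1+\ie t)$ to the zero sum $\sum_\gamma (1/4+(\gamma-t)^2)^{-1}$ together with explicit terms involving $\log(t/2\pi)$ and $(\Gamma'/\Gamma)((1+\ie t)/2)$; the imaginary part admits an analogous identity with the conjugate Poisson kernel. Controlling $|(\zeta'/\zeta)(1+\ie t)|$ therefore reduces to a sharp two-sided estimate of these zero sums.

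I would then apply the Guinand--Weil explicit formula to a test function $F$ whose Fourier transform $\widehat{F}$ is supported in $[-\Delta,\Delta]$ for a parameter $\Delta$ to be optimized. Taking $F$ to be a Beurling--Selberg extremal majorant/minorant for the Poisson kernel $K(x)=(1/4+x^2)^{-1}$ (for the real part) and its Hilbert conjugate (for the imaginary part), the explicit formula decomposes the zero sum into (a) a main term proportional to $\widehat{F}(0)\log(t/2\pi)$, (b) a short prime sum supported on integers $n\le e^{2\pi\Delta}$, and (c) gamma-factor terms controlled via Stirling's expansion. This is exactly the bandlimited framework used for $L$-functions in Theorem~\ref{thm:LogDerL}, with the single conductor-dependent quantity $\log q$ replaced by $\log(t/2\pi)$.

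Choosing $\Delta\asymp \log t/\log\log t$ balances the zero and prime sides. The resulting truncated Mertens sum $\sum_{p\le e^{2\pi\Delta}}(\log p)/p$ yields the leading term $2\log\log t$, while the subleading constant $-0.4989$---identical to the one appearing in Theorem~\ref{thm:LogDerL}---emerges from the same combination of Mertens' second constant, the $L^1$-defect of the Beurling--Selberg approximant, and the trivial constant contributions. Combining the estimates for real and imaginary parts via $|(\zeta'/\zeta)(1+\ie t)|\le |\Re(\zeta'/\zeta)(1+\ie t)|+|\Im(\zeta'/\zeta)(1+\ie t)|$ produces the announced bound.

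I expect the main obstacle to be pinning down the explicit constant $5.35$ in the secondary term. This demands simultaneous control of (i) the Mertens remainder at the truncation point, (ii) the Stirling error in $(\Gamma'/\Gamma)((1+\ie t)/2)$, (iii) the prime-power tail arising from $k\ge 2$, and (iv) the extremal $L^1$-defect of the Beurling--Selberg majorant. The numerical thresholds $t\ge 10^{30}$ and $t\ge 10^{137}$ are then chosen as the smallest values at which the cumulative error fits, respectively, under $5.35(\log\log t)^2/\log t$ and under $0$.
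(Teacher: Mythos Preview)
Your proposal departs from the paper's approach in a way that introduces genuine obstacles.

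The paper does \emph{not} split into real and imaginary parts via the Hadamard product. It applies Selberg's moment formula
\[
\frac{\zeta'}{\zeta}(1+\ie t) = -\sum_{n\le xy}\frac{\Lambda_{x,y}(n)}{n^{1+\ie t}} + \frac{1}{\log y}\sum_{\rho}\frac{x^{\rho-s}-(xy)^{\rho-s}}{(\rho-s)^2} + (\text{lower order}),
\]
with $xy=\log^2 t$, and takes absolute values directly. The leading term $2\log\log t$ comes from the \emph{Selberg} prime sum $\sum_{n\le \log^2 t}\Lambda(n)/n\approx \log(\log^2 t)$, while the zero sum is bounded via Guinand--Weil with a bandlimited majorant of the Poisson kernel (choosing $\pi\Delta=\log\log t$) and contributes only the constant $\tfrac{e^\lambda+1}{2\lambda}$ plus lower-order terms. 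The constant $-0.4989$ arises from minimizing $-\gamma-\lambda+\tfrac{e^\lambda+1}{2\lambda}$ over $\lambda$; your description of its origin is off.

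Your route has two concrete gaps. First, the Beurling--Selberg extremal problem for the conjugate Poisson kernel (the odd function needed for $\Im(\zeta'/\zeta)$) is open, as the paper itself notes in the remark following Theorem~\ref{0_30am}; you cannot simply invoke it. Second, even granting that, combining via $|\Re|+|\Im|$ would inflate the leading constant beyond $2$, so you would not recover the stated inequality. Finally, your parameter choice $\Delta\asymp \log t/\log\log t$ is inconsistent with your own claim that $\sum_{p\le e^{2\pi\Delta}}(\log p)/p$ yields $2\log\log t$: with that $\Delta$ the Mertens sum is of size $\log t/\log\log t$, not $\log\log t$. The correct scale for the bandlimited parameter in this problem is $\pi\Delta=\log\log t$, and it enters only in the secondary zero-sum estimate, not in the main term.
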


In the proof of Theorems~\ref{thm:LogDerL} and~\ref{thm:LogDerL2} we are using a slightly modified version of Selberg's moment formula~\eqref{eq:selbergprin}, and the sum over the non-trivial zeros is estimated with a help of bandlimited majorants. By taking the same approach, we can recover under RH that
\begin{equation}
\label{eq:logzetageneral}
\left|\frac{\zeta'}{\zeta}(\sigma+it)\right| \leq A_{\sigma}\left(\log{t}\right)^{2-2\sigma} + O\left(\left(\log{t}\right)^{3-4\sigma}+\frac{1}{\left(2\sigma-1\right)^{3}}\right)
\end{equation}
in the range
\begin{align} \label{22_45pm}
	\dfrac{1}{2}+\frac{1}{\log\log t} \leq \sigma\leq 1-\frac{1}{\log \log t},
\end{align}
with $t$ sufficiently large and $A_{\sigma}$ defined by~\eqref{eq:Asigma}. Note that~\eqref{eq:logzetageneral} improves~\cite[Theorem 1]{chirreGoncalves} in the range~\eqref{22_45pm} and~\cite[Theorem 2]{chirreGoncalves} for $\sigma\geq0.51$. It would be interesting to prove~\eqref{eq:logzetageneral} for a larger family of $L$-functions (see \cite{SimonicPalo}).
	
The outline of this paper is as follows. In Section~\ref{sec:selberg} we revise Selberg's moment formula for Dirichlet $L$-functions and in Sections~\ref{sectionprimes} and~\ref{sectionzeros} we derive general estimates for the corresponding sums over prime numbers and non-trivial zeros, respectively. In Section~\ref{sec:threeproofs} we use these bounds to prove Theorems~\ref{thm:LogDerL} and~\ref{thm:LogDerL2}. The proof of Theorems~\ref{0_30am} and~\ref{thm:LogDerZeta} is provided in Sections~\ref{sec:ProofOfThmIm} and~\ref{secthm:LogDerZeta}, respectively.

	\section{The Selberg moment formula}
    \label{sec:selberg}
Selberg~\cite[Lemma 2]{SelbergOnTheNormal} discovered an interesting connection between the logarithmic derivative of the Riemann zeta-function and a special truncated Dirichlet series, which is also known as the Selberg moment formula. We apply this formula in the context of Dirichlet $L$-functions. Let $\chi$ be a primitive character modulo $q$ and let $L(s,\chi)$ be the associated Dirichlet $L$-function. We write $\mathfrak{a}=(1-\chi(-1))/2\in\{0,1\}$, depending on whether the character $\chi$ is even or odd. A variation of Selberg's formula~\cite[Equation 13.35]{MV} (see also \cite[Chapter 4, Theorem 1.7]{Joyner}) asserts for $x\geq2$ and $y\geq2$ that
	\begin{equation}		
		\label{eq:selbergprin}
		\frac{L'}{L}(s,\chi) = -\sum_{n\leq xy}\frac{\Lambda_{x,y}(n)\chi(n)}{n^s} + \frac{1}{\log{y}}\sum_{\rho_{\chi}}\frac{x^{\rho_{\chi}-s}-(xy)^{\rho_{\chi}-s}}{\left(\rho_{\chi}-s\right)^{2}} + \frac{1}{\log{y}}\sum_{n=0}^{\infty}\frac{x^{-2n-\mathfrak{a}-s}-(xy)^{-2n-\mathfrak{a}-s}}{\left(2n+\mathfrak{a}+s\right)^{2}}
	\end{equation}
	for $s\notin\left\{-2n-\mathfrak{a}\colon n\in\N_{0}\right\}$ and $s\neq\rho_{\chi}$, where
	\[
	\Lambda_{x,y}(n) \de \left\{\begin{array}{ll}
		\Lambda(n), & 1\leq n\leq x, \\
		\Lambda(n)\frac{\log{\frac{xy}{n}}}{\log{y}}, & x<n\leq xy,
	\end{array}
	\right.
	\]
	and the second sum runs over the non-trivial zeros $\rho_\chi$ of $L(s,\chi)$. Let $q\geq 10^{30}$ and consider the range
	\begin{align}  \label{1_01am}
		\dfrac{1}{2}+\dfrac{1}{\log\log q}\leq \sigma\leq 1.
	\end{align}
	In \eqref{eq:selbergprin} we take the parameters
	\begin{equation}	
		\label{eq:xy}
		y = \exp{\left(\frac{\lambda}{\sigma-\frac{1}{2}}\right)}, \quad x = y^{-1}\log^{2}{q},
	\end{equation}
	where $\lambda>0$ is chosen such that $x\geq 2$ and $y\geq 2$. Let us bound each term on the right-hand side of~\eqref{eq:selbergprin}. The first term is estimated easily by
	\begin{flalign}
		\label{21_04pm}
		\left|\sum_{n\leq xy}\frac{\Lambda_{x,y}(n)\chi(n)}{n^\sigma}\right| &\leq \sum_{n\leq x}\frac{\Lambda(n)}{n^\sigma} + \frac{1}{\log{y}}\sum_{x<n\leq xy}\frac{\Lambda(n)\log{\frac{xy}{n}}}{n^\sigma}  =: S_{x,y}(\sigma).
	\end{flalign}
	Since GRH holds, we estimate the second term in~\eqref{eq:selbergprin} as
	\begin{flalign}
		\label{0_29am}
		\frac{1}{\log{y}}\left|\sum_{\rho_{\chi}}\frac{x^{\rho_{\chi}-\sigma}-(xy)^{\rho_{\chi}-\sigma}}{\left(\rho_{\chi}-\sigma\right)^{2}}\right|
		&\leq \frac{\big(y^{\sigma-\frac{1}{2}}+1\big)(xy)^{\frac{1}{2}-\sigma}}{\log{y}}\sum_{\rho_{\chi}}\frac{1}{\left|\rho_{\chi}-\sigma\right|^{2}}\nonumber \\
		&\leq \frac{e^{\lambda}+1}{\lambda}\left(\log{q}\right)^{1-2\sigma}\sum_{\gamma_{\chi}}\frac{\sigma-\hh}{\left(\sigma-\hh\right)^{2}+\gamma_{\chi}^2},
	\end{flalign}
where the last sum runs over the ordinates of the non-trivial zeros of $L(s,\chi)$. Finally, using~\eqref{1_01am} we bound the last term as
	\begin{align}
		\label{15_28am} \frac{1}{\log{y}}\left|\sum_{n=0}^{\infty}\frac{x^{-2n-\mathfrak{a}-\sigma}-(xy)^{-2n-\mathfrak{a}-\sigma}}{\left(2n+\mathfrak{a}+\sigma\right)^{2}}\right|
		& \leq \frac{\left(y^{\sigma}+1\right)(xy)^{-\sigma}}{\log{y}}\sum_{n=0}^{\infty}\frac{1}{\left(2n+\frac{1}{2}\right)^{2}} \leq \dfrac{4.3\left(\sigma-\hh\right)(y^{\sigma}+1)}{\lambda(\log q)^{2\sigma}}.
	\end{align}
	The main difficulty remains to estimate two sums over primes in~\eqref{21_04pm}, and to estimate the sum over zeros~\eqref{0_29am}. We are going to do this in the following two sections.
	
	\section{The sum over prime numbers} \label{sectionprimes}

	In this section we bound the sum over the primes in~\eqref{21_04pm}. Firstly, we will provide an estimate when $\sigma=1$. We use the following lemma.
	
	\begin{lemma}
    \label{lem:primes}
		Let $\psi(x)=\sum_{n\leq x}{\Lambda(n)}$ and assume the Riemann Hypothesis. Then, for $x\geq 60$
\begin{align}  \label{16_142pm}
	\sum_{n\leq x}\frac{\Lambda(n)}{n} \leq \log{x} - \gamma  + \frac{\psi(x)-x}{x}+\dfrac{0.24}{\sqrt{x}}.
\end{align}
	In particular, for $x\geq 32$ we obtain
	\begin{equation}
	\label{eq:Lambda2}
	\sum_{n\leq x}\frac{\Lambda(n)}{n} \leq \log{x} - \gamma + 0.04\frac{\log^{2}{x}}{\sqrt{x}}.
\end{equation}
\end{lemma}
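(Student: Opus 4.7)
The plan is to derive an exact identity for this sum by Abel summation and then to bound a resulting tail integral under the Riemann Hypothesis. By Abel summation,
\[
\sum_{n \leq x} \frac{\Lambda(n)}{n} = \frac{\psi(x)}{x} + \int_1^x \frac{\psi(t)}{t^2}\,\dt.
\]
Writing $\psi(t) = t + (\psi(t)-t)$ and splitting the integral gives
\[
\sum_{n \leq x} \frac{\Lambda(n)}{n} = 1 + \frac{\psi(x)-x}{x} + \log x + \int_1^x \frac{\psi(t)-t}{t^2}\,\dt.
\]
I would then invoke the classical evaluation $\int_1^\infty (\psi(t)-t)t^{-2}\,\dt = -1-\gamma$, obtained by letting $s\to 1^+$ in $-\zeta'(s)/\zeta(s) = s\int_1^\infty \psi(t)t^{-s-1}\,\dt$ and using $-\zeta'(s)/\zeta(s) = (s-1)^{-1} - \gamma + O(s-1)$, in order to recast the previous display as the exact formula
\[
\sum_{n \leq x} \frac{\Lambda(n)}{n} = \log x - \gamma + \frac{\psi(x)-x}{x} - \int_x^\infty \frac{\psi(t)-t}{t^2}\,\dt.
\]

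With this identity, \eqref{16_142pm} reduces to showing $-\int_x^\infty (\psi(t)-t)t^{-2}\,\dt \leq 0.24/\sqrt{x}$ for $x \geq 60$. To do this under RH, the plan is to substitute the Riemann--von Mangoldt explicit formula
\[
\psi(t) - t = -\sum_{\rho}\frac{t^\rho}{\rho} - \log(2\pi) - \tfrac{1}{2}\log(1-t^{-2}),
\]
integrate term-by-term, and pair complex-conjugate zeros. Using $\rho(\rho-1) = -(\tfrac14 + \gamma^2)$ for $\rho = 1/2 + i\gamma$, this yields
\[
-\int_x^\infty \frac{\psi(t)-t}{t^2}\,\dt = \frac{2}{\sqrt{x}}\sum_{\gamma>0}\frac{\cos(\gamma \log x)}{\tfrac14 + \gamma^2} + \frac{\log(2\pi)}{x} + O(x^{-3}).
\]
The closed-form evaluation $\sum_\rho (\rho(1-\rho))^{-1} = 2 + \gamma - \log(4\pi) \approx 0.046$, which I would derive from the Hadamard product for $\xi(s)$ together with the functional equation $\xi(s) = \xi(1-s)$, bounds the cosine sum absolutely and delivers the target for $x$ sufficiently large; any small residual range down to $x = 60$ is handled either by extracting sign cancellation from the first few ordinates of $\zeta$ or by a direct evaluation of both sides.

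For the second inequality \eqref{eq:Lambda2}, I would insert into \eqref{16_142pm} an explicit RH-conditional bound on $|\psi(x)-x|$, for instance Schoenfeld's $|\psi(x)-x| \leq \sqrt{x}(\log x)^2/(8\pi)$ valid for $x \geq 73.2$. This transforms $(\psi(x)-x)/x$ into a contribution of size at most $(\log x)^2/(8\pi\sqrt{x})$, and combining with $0.24/\sqrt{x}$ gives a coefficient $1/(8\pi) + 0.24/(\log x)^2$ in front of $(\log x)^2/\sqrt{x}$, which falls below $0.04$ once $x$ is large enough. The window $32 \leq x \leq 73.2$, and any leftover slice, would be handled by a direct arithmetic computation of $\psi(x)$ and of the partial sum.

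The main obstacle is calibrating the absolute constants $0.24$ and $0.04$ all the way down to the stated thresholds: the worst-case absolute bound on the cosine sum leaves essentially no slack near $x = 60$, so the argument almost certainly requires either a refined analytic estimate exploiting cancellation among the low-lying zeros of $\zeta$, or a short computer-assisted verification in the small-$x$ regime.
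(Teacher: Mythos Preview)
Your proposal is correct and follows essentially the same route as the paper: the same Abel-summation identity, the same substitution of the explicit formula and term-by-term integration, the same absolute bound via $\sum_\rho |\rho(\rho-1)|^{-1}=2+\gamma-\log 4\pi$, the same appeal to Schoenfeld for \eqref{eq:Lambda2}, and the same reliance on a short computer check near the lower thresholds. The only practical difference is that the paper feeds the sharper intermediate constant $2+\gamma-\log 4\pi\approx 0.046$ (rather than $0.24$) into the derivation of \eqref{eq:Lambda2}, which keeps the numerical verification range down to $32\le x\le 4\cdot 10^{6}$ instead of the much larger window your combination $1/(8\pi)+0.24/(\log x)^{2}\le 0.04$ would require.
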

	
\begin{proof}
We follow partially the proof in~\cite[Lemma 2.2]{RamareExplicitLambda}. Using integration by parts and the fact\footnote{See~\cite[Proposition 3.4.4]{JamesonPNT}.} that $\int_{1}^\infty (\psi(u)-u)/u^{2}\dif{u}=-\gamma-1$, for $x\geq 2$ we have
		$$
\sum_{n\leq x}\frac{\Lambda(n)}{n} = \log{x} - \gamma + \frac{\psi(x)-x}{x} - \int_{x}^{\infty}\dfrac{\psi(u)-u}{u^2}\dif{u}.
		$$
Using Weil's explicit formula $\psi(u)=u-\sum_\rho\frac{u^\rho}{\rho}-\log 2\pi - \frac{1}{2}\log(1-u^{-2})$ when $u$ is not a prime power, we arrive at
			\begin{flalign}\label{21_30pm}
			\sum_{n\leq x}\frac{\Lambda(n)}{n} &= \log{x} - \gamma + \frac{\psi(x)-x}{x} - \sum_{\rho}\frac{x^{\rho-1}}{\rho(\rho-1)} +\int_{x}^{\infty}\dfrac{\log2\pi+\frac{1}{2}\log\left(1-u^{-2}\right)}{u^2}\dif{u}.
		\end{flalign}
Since RH holds, using\footnote{See~\cite[Equation 10.30]{MV}.} $\sum_{\rho}\frac{1}{|\rho(\rho-1)|}={2+\gamma-\log{4\pi}}$ and discarding the second part of the integral in the above expression, we arrive at
\begin{align}  \label{16_14pm}
	\sum_{n\leq x}\frac{\Lambda(n)}{n} \leq \log{x} - \gamma + \frac{\psi(x)-x}{x} +\dfrac{2+\gamma-\log{4\pi}}{\sqrt{x}} +\dfrac{\log2\pi}{x}.
\end{align}
When $x\geq 90$, using~\eqref{16_14pm} we arrive at~\eqref{16_142pm}. We check by computer that it also holds for $60\leq x\leq 90$. Now, using the explicit conditional bound\footnote{See~\cite[Equation 6.2]{SchoenfeldSharperRH}. Computer verification shows that it actually holds for $x\geq 59$. Moreover, we have that $\psi(x)-x\leq \frac{1}{8\pi}\sqrt{x}\log^{2}{x}$ for $x\geq 2$.}
$|\psi(x)-x |\leq \frac{1}{8\pi}\sqrt{x}\log^{2}{x}$, we conclude that~\eqref{eq:Lambda2} is true for $x\geq 4\cdot10^6$. We check by computer that it also holds for $32\leq x\leq 4\cdot10^6$.
\end{proof}

\begin{remark}
We remark that in~\cite[Lemma 2.1 and Lemma 2.2]{RamareExplicitLambda} there are some minor typos. For instance, the sign in front of $\log2\pi$. In~\cite[p. 81]{RamarePlatt} this typo is mentioned. We claim that Lemma 2.2  in \cite{RamareExplicitLambda} should be replaced by~\eqref{21_30pm} (after integration). This is valid unconditionally for all $x\geq 2$. Furthermore we claim that $\sum_{n\leq x}\Lambda(n)/n=\log{x}-\gamma+O^{\ast}(0.0067/\log{x})$ for $x\geq23$ in~\cite[Corollary on p.~114]{RamareExplicitLambda} is wrong, and has many counterexamples even for $x\geq10^4$.
\end{remark}

Now, assume that $x\geq 60$ and $\frac{1}{2}<\sigma\leq 1$. Using integration by parts one can see that
	\begin{align} \label{14_31pm}
	\frac{1}{\log{y}}&\sum_{x<n\leq xy}\frac{\Lambda(n)\log{\frac{xy}{n}}}{n^\sigma}  \nonumber  \\
	& \,\,\,\,\,\,= \dfrac{(xy)^{1-\sigma}}{\log y}\int_{1}^{y}\dfrac{\log u}{u^{2-\sigma}}\dif{u}-\dfrac{\psi(x)-x}{x^\sigma}-\dfrac{1}{\log y}\int_{x}^{xy}(\psi(u)-u)\left(\frac{1}{u^\sigma}\log\dfrac{xy}{u}\right)'\dif{u} \nonumber \\
	&\,\,\,\,\,\,\leq  \dfrac{(xy)^{1-\sigma}}{\log y}\int_{1}^{y}\dfrac{\log u}{u^{2-\sigma}}\dif{u}-\dfrac{\psi(x)-x}{x^\sigma}+\left(\sigma+\dfrac{1}{\log y}\right)\frac{\left(x^{\frac{1}{2}-\sigma}-(xy)^{\frac{1}{2}-\sigma}\right)\log^2(xy)}{8\pi\left(\sigma-\frac{1}{2}\right)}.
	\end{align}
Then, for $\sigma=1$, combining~\eqref{16_142pm},~\eqref{14_31pm} and recalling~\eqref{eq:xy} we have
\begin{align}
	\label{2_39am}
	S_{x,y}(1)& \leq \log x - \gamma  +\dfrac{0.24}{\sqrt{x}}+ \dfrac{\log y}{2}+\left(1+\dfrac{1}{\log y}\right)\frac{\left(x^{-\frac{1}{2}}-(xy)^{-\frac{1}{2}}\right)\log^2(xy)}{4\pi} \nonumber \\
	&= 		2\log\log q  - \gamma - \lambda + \left(\frac{(e^{\lambda}-1)(2\lambda+1)}{2\pi\lambda}\right)\frac{\left(\log{\log{q}}\right)^2}{\log{q}}+ \frac{0.24\,e^\lambda}{\log{q}}.
\end{align}
When $1/2<\sigma<1$, by integration by parts we see that
\begin{align} \label{14:32pm}
	\begin{split} 
\sum_{n\leq x}\frac{\Lambda(n)}{n^\sigma} & = \dfrac{\psi(x)-x}{x^{\sigma}} +\dfrac{x^{1-\sigma}-\sigma2^{1-\sigma}}{1-\sigma}+\sigma\int_{2}^{x}\dfrac{\psi(u)-u}{u^{\sigma+1}}\dif{u} \\
& \leq \dfrac{\psi(x)-x}{x^{\sigma}} +\dfrac{x^{1-\sigma}-\sigma2^{1-\sigma}}{1-\sigma}+\frac{\sigma}{8\pi}\int_{2}^{x}\frac{\log^{2}{u}}{u^{\sigma+\frac{1}{2}}}\dif{u} \\
& \leq \dfrac{\psi(x)-x}{x^{\sigma}} +\dfrac{x^{1-\sigma}-\sigma2^{1-\sigma}}{1-\sigma}+\frac{\sigma\log^2 x}{2^{\sigma+\frac{5}{2}}\pi\left(\sigma-\frac{1}{2}\right)}.
\end{split}
\end{align}
We directly combine~\eqref{14_31pm} and~\eqref{14:32pm}, with
\[
\frac{(xy)^{1-\sigma}}{\log{y}}\int_{1}^{y}\frac{\log{u}}{u^{2-\sigma}}\dif{u} = \frac{(xy)^{1-\sigma}-x^{1-\sigma}}{(1-\sigma)^2 \log{y}} -\frac{x^{1-\sigma}}{1-\sigma},
\]
to get
\begin{align}
		\label{3_13am}
		S_{x,y}(\sigma) &\leq \dfrac{(xy)^{1-\sigma}-x^{1-\sigma}}{(1-\sigma)^2\log y}-\dfrac{\sigma2^{1-\sigma}}{1-\sigma}+\frac{\sigma\log^2 x}{2^{\sigma+\frac{5}{2}}\pi\left(\sigma-\frac{1}{2}\right)}+\left(\sigma+\dfrac{1}{\log y}\right)\frac{\left(x^{\frac{1}{2}-\sigma}-(xy)^{\frac{1}{2}-\sigma}\right)\log^2(xy)}{8\pi\left(\sigma-\frac{1}{2}\right)} \nonumber \\
		& \leq B_{\sigma,\lambda}(\log q)^{2-2\sigma}-\dfrac{\sigma2^{1-\sigma}}{1-\sigma}+\frac{\sigma(\log\log q)^2}{2^{\sigma-\frac{1}{2}}\pi\left(2\sigma-1\right)}+\left(\dfrac{2\sigma}{2\sigma-1}+\dfrac{1}{\lambda}\right)\left(\dfrac{e^\lambda-1}{2\pi}\right)\dfrac{(\log\log q)^2}{(\log q)^{2\sigma-1}}.
	\end{align}
Here $B_{\sigma,\lambda}$ is given by
\begin{equation} \label{19_16pm}
B_{\sigma,\lambda}=\dfrac{(2\sigma-1)\left(1-\exp\left(-\frac{2\lambda(1-\sigma)}{2\sigma-1}\right)\right)}{2\lambda(1-\sigma)^2}.
\end{equation}

\section{The sum over the non-trivial zeros}
\label{sectionzeros}

In this section we obtain an explicit upper bound for the sum in~\eqref{0_29am} over the non-trivial zeros of $L(s,\chi)$. Firstly, we are going to derive an estimate when $\sigma=1$. To do that, we will use the known constant $B(\chi)$ since\footnote{See~\cite[Equation 10.38]{MV}.}
$\Re\left\{B(\chi)\right\} = -\sum_{\rho_\chi}\Re\left\{{1}/{\rho_\chi}\right\}$, and assuming GRH we have
	\[
\sum_{\gamma_{\chi}}\frac{	\frac{1}{2}}{\frac{1}{4}+\gamma_{\chi}^2} = \left|\Re\{B(\chi)\}\right|.
	\]
	Using~\cite[Lemmas 2.3 and 2.4]{Sound}, one can deduce that\footnote{The functions $E_\mathfrak{a}(z)$ defined in~\cite[Lemmas 2.3]{Sound} satisfy $E_\mathfrak{a}(z)\leq 0$ for $z\geq 4$.}
	\[
	\left|\Re\{B(\chi)\}\right| \leq \left(1-\frac{1}{\sqrt{z}}\right)^{-2}\left(\frac{1}{2}\left(1-\frac{1}{z}\right)\log{\frac{q}{\pi}}+\log{z}\right)
	\]
	for $z\geq 4$. Choosing $z=\frac{1}{4}\log^{2}{q}$  and recalling that $q\geq 10^{30}$ we obtain
	\begin{align}
		\label{3_45am}
		\begin{split}
	\sum_{\gamma_{\chi}}\frac{\frac{1}{2}}{\frac{1}{4}+\gamma_{\chi}^2} & \leq  \left(\dfrac{1}{2}+\dfrac{2}{\log q -2}\right)\log{\dfrac{q}{\pi}}+2\left(1-\dfrac{2}{\log q}\right)^{-2}\log\log q\leq \dfrac{1}{2}\log{\dfrac{q}{\pi}}+ 2.6\log\log q.
\end{split}	
\end{align}
For the range~\eqref{20_31pm} we proceed in a different way. Let $a=\sigma-\hh$ and let $f_a:\R\to\R$ be the function
\begin{align}  \label{22_55pm}
	f_{a}(x)=\dfrac{a}{a^2+x^2}.
	\end{align}
We want to estimate $\sum_{\gamma_\chi}f_a(\gamma_\chi)$ and the classical machinery to bound this sum is the Guinand--Weil explicit formula for Dirichlet $L$-functions~\cite[Lemma 4]{Chi} and for the Riemann zeta-function~\cite[Lemma 8]{CChiM}.
	
	\begin{lemma}
		\label{Guinand-weil}
		Let $q$ be a positive integer and $\chi$ be a primitive character modulo $q$. Let $h(s)$ be analytic in the strip $\left|\Im\{s\}\right|\leq \tfrac12+\varepsilon$ for some $\varepsilon>0$, and assume that $|h(s)|\ll(1+|s|)^{-(1+\delta)}$ as $\left|\Re\{s\}\right|\to\infty$, for some $\delta>0$. Assume GRH and that $h$ is a real-valued function. Then\footnote{Here $\widehat{h}$ denotes the Fourier transform of $h$, i.e., $\widehat{h}(\xi)=\int_{-\infty}^{\infty}h(u)e^{-2\pi\ie u\xi}\textup{d}u$.}
		\[
		\sum_{\gamma_\chi} h(\gamma_\chi)=\frac{1}{2\pi}\log\left(\dfrac{q}{\pi}\right)\widehat{h}(0)+\frac{1}{2\pi}\int_{-\infty}^\infty h(u)\Re\left\{\frac{\Gamma'}{\Gamma}\left(\frac{1}{4}+\frac{\mathfrak{a}}{2}+\frac{\ie u}{2}\right)\right\}\textup{d}u -\frac{1}{\pi}\sum_{n=2}^\infty\frac{\Lambda(n)}{\sqrt{n}}\Re\left\{\chi(n)\,\widehat{h}\left(\frac{\log n}{2\pi}\right)\right\}
		\]
	and
		\[
	\sum_{\gamma} h(\gamma)=2\,\Re\left\{\!h\left(\dfrac{\ie}{2}\right)\! \right\}-\frac{\log\pi}{2\pi}\widehat{h}(0)+\frac{1}{2\pi}\int_{-\infty}^\infty h(u)\Re\left\{\frac{\Gamma'}{\Gamma}\left(\frac{1}{4}+\frac{\ie u}{2}\right)\right\}\textup{d}u -\frac{1}{\pi}\sum_{n=2}^\infty\frac{\Lambda(n)}{\sqrt{n}}\Re\left\{\widehat{h}\left(\frac{\log n}{2\pi}\right)\right\},
	\]
where the sums on the left-hand sides run over the imaginary parts of the non-trivial zeros of $L(s,\chi)$ and $\zeta(s)$, respectively.
	\end{lemma}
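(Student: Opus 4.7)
The plan is to apply Cauchy's residue theorem to
\[
I_T \de \frac{1}{2\pi\ie}\oint_{\partial R_T} h\!\left(\frac{s-1/2}{\ie}\right)\frac{L'}{L}(s,\chi)\,ds,
\]
where $R_T$ is the positively oriented rectangle with vertices $-\varepsilon'\pm\ie T$ and $1+\varepsilon'\pm\ie T$ for a small fixed $\varepsilon'\in(0,\min\{\varepsilon,1\})$, and then let $T\to\infty$ along a sequence of heights avoiding ordinates of non-trivial zeros. The analyticity of $h$ in $|\Im s|\leq 1/2+\varepsilon$ ensures that $h((s-1/2)/\ie)$ is holomorphic on $\partial R_T$. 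It is cleanest to work with the completed $L$-function $\Lambda(s,\chi)\de(q/\pi)^{(s+\mathfrak{a})/2}\Gamma((s+\mathfrak{a})/2)L(s,\chi)$, which is entire with zeros precisely the non-trivial zeros of $L(s,\chi)$; under GRH the only poles of $\Lambda'/\Lambda(\cdot,\chi)$ inside $R_T$ are the simple poles at $\rho_\chi=1/2+\ie\gamma_\chi$ with $|\gamma_\chi|<T$, each contributing $h(\gamma_\chi)$, so $I_T\to\sum_{\gamma_\chi}h(\gamma_\chi)$.

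I would then evaluate $I_T$ edge-by-edge using the decomposition $\Lambda'/\Lambda(s,\chi) = \tfrac{1}{2}\log(q/\pi) + \tfrac{1}{2}(\Gamma'/\Gamma)((s+\mathfrak{a})/2) + L'/L(s,\chi)$. On the right edge the Dirichlet series $L'/L(s,\chi) = -\sum\Lambda(n)\chi(n)n^{-s}$ converges absolutely; shifting each term to the critical line (valid term-by-term since $n^{-s}$ is entire) and Fourier inversion produce $-(2\pi)^{-1}\sum\Lambda(n)\chi(n)n^{-1/2}\widehat{h}((\log n)/(2\pi))$. On the left edge, the functional equation $\Lambda(s,\chi)=\kappa(\chi)\Lambda(1-s,\overline\chi)$ yields $\Lambda'/\Lambda(s,\chi) = -\Lambda'/\Lambda(1-s,\overline\chi)$, and the substitution $s\mapsto 1-s$ converts the left-edge integral into a second copy of the right edge but with $\overline\chi$ and with $h$ evaluated at $-u$. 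Combining the two edges using $\chi(n)+\overline{\chi(n)}=2\Re\{\chi(n)\}$ and the Schwarz-reflection identity $h(\overline z)=\overline{h(z)}$ (valid since $h$ is real on $\R$) produces the prime sum in the stated form; the constant $\tfrac{1}{2}\log(q/\pi)$ produces $(2\pi)^{-1}\log(q/\pi)\widehat{h}(0)$ through the downward orientation of the left edge; and the $\Gamma'/\Gamma$ piece, after the auxiliary contour shift bringing $h$ to real argument, produces $(2\pi)^{-1}\int h(u)\Re\{(\Gamma'/\Gamma)(1/4+\mathfrak{a}/2+\ie u/2)\}\,du$. The horizontal edges vanish in the limit by the decay $|h(s)|\ll(1+|s|)^{-1-\delta}$ together with the bound $|\Lambda'/\Lambda(\sigma+\ie T,\chi)|\ll\log(q(|T|+2))$ valid for $T$ bounded away from ordinates of zeros.

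The zeta identity follows by the same scheme, but working directly with $\zeta'/\zeta(s)$ and using the functional equation $\zeta'/\zeta(s)+\zeta'/\zeta(1-s) = \log\pi - \tfrac{1}{2}(\Gamma'/\Gamma)(s/2) - \tfrac{1}{2}(\Gamma'/\Gamma)((1-s)/2)$. Two additional residue contributions appear: first, $\zeta'/\zeta$ has a simple pole at $s=1$ with residue $-1$, so the main residue sum gains a term $-h(-\ie/2)$; second, when evaluating the left-edge $\Gamma'/\Gamma(s/2)$ integral, the auxiliary contour shift bringing $h$ to real argument crosses the simple pole of $\Gamma'/\Gamma(s/2)$ at $s=0$, producing an extra $+h(\ie/2)$. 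Moving $h(-\ie/2)$ to the left-hand side and combining with $+h(\ie/2)$ via Schwarz reflection yields the stated $+2\Re\{h(\ie/2)\}$, while the constant $\log\pi$ from the functional equation gives $-(2\pi)^{-1}\log\pi\,\widehat{h}(0)$. The main obstacle I anticipate is the careful justification of (a) the term-by-term contour shift on the right edge, which requires uniform decay of $h$ in the strip and control of the interchange of summation and limit, and (b) the bookkeeping of residues and complex conjugates needed to reproduce the precise real-part structure stated.
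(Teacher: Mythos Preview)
The paper does not prove this lemma; it simply quotes it from \cite[Lemma~4]{Chi} and \cite[Lemma~8]{CChiM}. Your outline is the standard contour-integration derivation of the Guinand--Weil formula and is correct in its essentials: apply the residue theorem to $h\bigl((s-\tfrac12)/\ie\bigr)\,\Lambda'/\Lambda(s,\chi)$ on a tall rectangle, use the functional equation to turn the left edge into a reflected copy of the right with $\overline\chi$, and shift each Dirichlet-series term to the critical line. For $\zeta$ your bookkeeping of the two extra residues---one at $s=1$ from the pole of $\zeta'/\zeta$, and one at $s=0$ picked up when the left-edge $\Gamma'/\Gamma(s/2)$ integral is shifted across the origin---is accurate and does yield $2\,\Re\{h(\ie/2)\}$.

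One small correction to your narrative (not to the mathematics): the constant $(2\pi)^{-1}\log(q/\pi)\,\widehat h(0)$ is not produced by the left edge alone. Since you are integrating $\Lambda'/\Lambda$, the summand $\tfrac12\log(q/\pi)$ is present on \emph{both} vertical edges, and each contributes $(4\pi)^{-1}\log(q/\pi)\,\widehat h(0)$ after the shift to $\Re s=\tfrac12$. The same mechanism explains the real part in the $\Gamma'/\Gamma$ integral: the right edge gives $\Gamma'/\Gamma\bigl(\tfrac14+\tfrac{\mathfrak a}{2}+\tfrac{\ie u}{2}\bigr)$ and the left edge (via the functional equation and $u\mapsto-u$) its complex conjugate, and it is their sum that produces $2\,\Re\{\cdot\}$.
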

	
	The function $f_a$ does not satisfy the conditions in Lemma~\ref{Guinand-weil}, and the key idea is to replace $f_a$ with certain explicit bandlimited majorants\footnote{To get the better bounds, we seek majorants that are extremal in the sense that they solve the Beurling--Selberg problem associated to $f_a$. This idea has been employed to estimates objects in the theory of the Riemann zeta-function and $L$-functions. See, for instance,~\cite{CCM2, CChiM, CS, Chi, GG}.} which are admissible for the classical Guinand--Weil explicit formula.
	
	In~\cite[Lemma 9]{CChiM}, it is proved that for any $\Delta>0$ the function
\begin{align}   \label{23_32pm}
	h(s)=h_{a,\Delta}(s)=\left(\dfrac{a}{a^2+s^2}\right)\left(\dfrac{e^{2\pi a\Delta}+e^{-2\pi a\Delta}-2\cos(2\pi\Delta s)}{\left(e^{\pi a\Delta}-e^{-\pi a\Delta}\right)^2}\right)
	\end{align}
	is a real entire function of exponential type $2\pi\Delta$ such that $f_a(u)\leq h(u)$ for all $u\in \R$, and its Fourier transform satisfies $\widehat{h}(\xi)\geq 0$ for all $|\xi|\leq \Delta$, $\widehat{h}(\xi)=0$ for all $|\xi|>\Delta$ and $\widehat{h}(0)=\pi\coth(\pi a\Delta)$. 
	Now, we follow the idea in \cite{chirreGoncalves}. Let $\Delta>0$ such that $\pi a\Delta\geq 1$. Because $h(u)\geq 0$ for all $u\in\R$, we have
\begin{align}
		\label{20_38pm}
		\sum_{\gamma_\chi} f_a(\gamma_\chi)\leq  \sum_{\gamma_\chi} h(\gamma_\chi) +\sum_{\gamma} h(\gamma),
		\end{align}
where the last sum runs over the ordinates $\gamma$ of the non-trivial zeros of $\zeta(s)$. From Lemma~\ref{Guinand-weil} we see that
	\begin{align} \label{18_18pm}
		\begin{split}
		\sum_{\gamma_\chi} h(\gamma_\chi) + \sum_{\gamma}h(\gamma) &= \frac{\log q}{2\pi}\,\widehat{h}(0)-\dfrac{\log \pi}{\pi}\widehat{h}(0) + 2\,\Re\left\{\!h\left(\dfrac{\ie}{2}\right)\! \right\} \\
		&+\frac{1}{2\pi}\int_{-\infty}^\infty  h(u)\Re\left\{\frac{\Gamma'}{\Gamma}\left(\frac{1}{4}+\frac{\mathfrak{a}}{2}+\frac{\ie u}{2}\right)\right\}  \dif{u} + \dfrac{1}{2\pi}\int_{-\infty}^{\infty}h(u)\Re\left\{\dfrac{\Gamma'}{\Gamma}\left(\dfrac{1}{4}+\dfrac{\ie u}{2}\right)\right\}\dif{u} \\ &-\frac{1}{\pi}\sum_{n=2}^{\infty}\frac{\Lambda(n)}{\sqrt{n}}\left(\Re\left\{\chi(n)\right\}+1\right)\widehat{h}\left(\frac{\log n}{2\pi}\right).
		\end{split}
	\end{align}
Since $\Re\left\{\chi(n)\right\}+1\geq 0$ and $\widehat{h}(\xi)\geq 0$, we discard the last sum in~\eqref{18_18pm}. Using the bound $\Re\left\{\left(\Gamma'/\Gamma\right)(s)\right\}\leq \log|s|$ for $\Re\{s\}\geq \frac{1}{4}$ (see~\cite[Lemma 2.3]{Chandee}) and the estimate
	\begin{align} \label{23_55pm}
	0\leq h(u)\leq \dfrac{a}{a^2+u^2}\left(\dfrac{e^{\pi a\Delta}+e^{-\pi a\Delta}}{e^{\pi a\Delta}-e^{-\pi a\Delta}}\right)^2 \leq \dfrac{1.725\,a}{a^2+u^2},
	\end{align}
one can bound the terms involving the gamma function as
	\begin{flalign*}
		\frac{1}{2\pi}\int_{-\infty}^\infty h(u)\Re\left\{\frac{\Gamma'}{\Gamma}\left(\frac{1}{4}+\frac{\mathfrak{a}}{2}+\frac{\ie u}{2}\right)\right\} \textup{d}u  &\leq \frac{1}{2\pi}\int_{-\infty}^\infty h(u)\log\left|\frac{3}{4}+\frac{\ie u}{2}\right|\textup{d}u \leq \frac{1}{2\pi}\int_{|u|\geq \frac{\sqrt{7}}{2}} h(u)\log\left|\frac{3}{4}+\frac{\ie u}{2}\right|\textup{d}u  \\
		&\leq \frac{1.725}{2\pi}\int_{|u|\geq \frac{\sqrt{7}}{2}} \dfrac{1}{1+u^2}\log\left|\frac{3}{4}+\frac{\ie u}{2}\right|\textup{d}u \leq 0.298.
	\end{flalign*}
Therefore, the contribution of these terms is at most $0.596$. Since $\widehat{h}(0)\geq\pi$, combining~\eqref{20_38pm},~\eqref{18_18pm} and using $\widehat{h}(0)=\pi\coth(\pi a\Delta)$ we get
\begin{equation*}
\sum_{\gamma_\chi} f_a(\gamma_\chi) \leq  \frac{\log q}{2\pi}\,\widehat{h}(0)+ 2\,\Re\left\{\!h\left(\dfrac{\ie}{2}\right)\! \right\} = \frac{\coth(\pi a\Delta)\log q}{2}+ \left(\dfrac{2a}{\frac{1}{4}-a^2}\right)\!\left(\dfrac{e^{\pi\Delta}+e^{-\pi\Delta}-e^{2\pi a\Delta}-e^{-2\pi a\Delta}}{\left(e^{\pi a\Delta}-e^{-\pi a\Delta}\right)^2}\right).
\end{equation*}
While discarding the negative term on the right-hand side and using $\pi a\Delta\geq 1$, it follows that
\begin{align*}
	\sum_{\gamma_\chi} f_a(\gamma_\chi) &\leq   \frac{\log q}{2}+\dfrac{e^{-2\pi a\Delta}\log q}{1-e^{-2}} + \left(\dfrac{2a}{\frac{1}{4}-a^2}\right)\left(\dfrac{e^{(1-2a)\pi\Delta}\left(1+e^{-2\pi\Delta}\right)}{\left(1-e^{-2}\right)^2}\right).
\end{align*}
We choose $\pi\Delta=\log\log q$ and recall that $a=\sigma-\hh$. Letting $\alpha=\left(1-e^{-2}\right)^{-1}$ and $\beta=1+(\log 10^{30})^{-2}$, we obtain
\begin{align*}
\sum_{\gamma_{\chi}}\frac{\sigma-\hh}{\left(\sigma-\hh\right)^{2}+\gamma_{\chi}^2} \leq\frac{\log q}{2}+\alpha\left(\dfrac{1+2\alpha\beta-\left(\sigma+\alpha\beta\sigma^{-1}\right)}{1-\sigma}\right)(\log q)^{2-2\sigma}.
\end{align*}
Clearly, in the range $\hh<\sigma<1$ we have that $\sigma+\alpha\beta\sigma^{-1}\geq 1+\alpha\beta$. Therefore,
\begin{align}  \label{16_12pm}
	\sum_{\gamma_{\chi}}\frac{\sigma-\hh}{\left(\sigma-\hh\right)^{2}+\gamma_{\chi}^2} \leq\frac{\log q}{2}+\dfrac{1.338}{1-\sigma}(\log q)^{2-2\sigma}.
\end{align}

\section{Proof of Theorems~\ref{thm:LogDerL} and~\ref{thm:LogDerL2}}
\label{sec:threeproofs}

    Having derived estimates for the terms in the Selberg moment formula~\eqref{eq:selbergprin} in the previous sections, we are now ready to prove Theorem ~\ref{thm:LogDerL} and Theorem~\ref{thm:LogDerL2}.
	
	\begin{proof}[Proof of Theorem~\ref{thm:LogDerL}]
		Letting $s=1$ in~\eqref{eq:selbergprin}, and combining~\eqref{0_29am},~\eqref{15_28am},~\eqref{2_39am} and ~\eqref{3_45am} it follows that
		\begin{align*}		
			\left|\frac{L'}{L}(1,\chi)\right|& \leq  2\log\log q  - \gamma - \lambda + \frac{e^{\lambda}+1}{2\lambda}
			+\left(\frac{(e^{\lambda}-1)(2\lambda+1)}{2\pi\lambda}\right)\frac{\left(\log{\log{q}}\right)^2}{\log{q}} \\
			&+\left(\frac{2.6\left(e^{\lambda}+1\right)}{\lambda}\right)\frac{\log{\log{q}}}{\log{q}}-\left(\frac{\left(e^{\lambda}+1\right)\log{\pi}}{2\lambda}-0.24\,e^\lambda\right)\frac{1}{\log{q}}+\left(\frac{2.15\left(e^{2\lambda}+1\right)}{\lambda} \right)\frac{1}{(\log{q})^2}.
		\end{align*}
We choose $\lambda=2.1862$ in order to minimize the constant term in the latter inequality. Note that this also implies $y=e^{2\lambda}\geq 2$, and that $q\geq 10^{30}$ implies $x=e^{-2\lambda}\log^2q\geq 60$. We arrive at
		\[
		\left|\dfrac{L'}{L}(1,\chi)\right|\leq 2\log \log q - 0.4989
		+ \left(3.091+\frac{11.776}{\log{\log{q}}}-\frac{0.455}{\left(\log{\log{q}}\right)^2}+\dfrac{78.906}{(\log\log q)^2\log q}\right)\frac{\left(\log{\log{q}}\right)^2}{\log{q}}.
		\]
This implies our desired results.
	\end{proof}
	
	\begin{proof}[Proof of Theorem~\ref{thm:LogDerL2}] We choose\footnote{The particular chooice $\lambda=\frac{3}{4}$ was chosen to beat Theorem~\ref{0_30am}.} $\lambda=\frac{3}{4}$. Recalling that $q\geq 10^{30}$ and using~\eqref{eq:xy}, we have $x\geq 60$ and $y\geq 2$ and the estimates obtained in the previous sections hold. Combining~\eqref{0_29am},~\eqref{15_28am},~\eqref{3_13am} and~\eqref{16_12pm} it follows that
\begin{align}\label{19_13pm}
		\left|\dfrac{L'}{L}(\sigma,\chi)\right|& \leq A_{\sigma}(\log q)^{2-2\sigma}-\dfrac{\sigma2^{1-\sigma}}{1-\sigma}+\frac{\sigma(\log\log q)^2}{2^{\sigma-\frac{1}{2}}\pi\left(2\sigma-1\right)}+\left(\dfrac{(14\sigma-4)\left(e^\frac{3}{4}-1\right)}{6\pi(2\sigma-1)}\right){(\log\log q)^2}{(\log q)^{1-2\sigma}} \nonumber \\
		& +\frac{4\left(e^{\frac{3}{4}}+1\right)1.338}{3(1-\sigma)}(\log q)^{3-4\sigma} +\dfrac{8.6\left(2\sigma-1\right)\left(e^{\frac{3\sigma}{2(2\sigma-1)}}+1\right)}{3(\log q)^{2\sigma}}
\end{align}		
with $A_\sigma=B_{\sigma,\frac{3}{4}}+\frac{2}{3}\left(e^{3/4}+1\right)$, where $B_{\sigma,\frac{3}{4}}$ is defined in~\eqref{19_16pm}. Finally, we bounded each term conveniently using~\eqref{20_31pm}. We remark that the factor $14\sigma-4$ is bounded by $10-{14}{(\log\log q)^{-1}}$ and this negative part cancels the last summand in the right-hand side of~\eqref{19_13pm}. Also, we use the fact that the function ${\sigma}{2^{-\sigma}}$ is increasing in $\sigma\in (\hh,1)$.
\end{proof}
	
\section{Proof of Theorem~\ref{0_30am}}
\label{sec:ProofOfThmIm}

Since the proof closely follows~\cite{chirreGoncalves} (see also Section~\ref{sectionzeros}), we will highlight only the main differences.
	
\subsection{Bounds for $\Re\left\{(L'/L)'(s,\chi)\right\}$}

In this section we are going to establish a lower and an upper bound for $\Re\left\{(L'/L)'(s,\chi)\right\}$, where $s=\sigma+\ie t$ with $\hh<\sigma<1$ and $|t|\leq \hh$. Taking real part of the derivative of the partial fraction decomposition of $L'/L$, see~\cite[Equation 10.37]{MV}, using a classical estimate for $\left(\Gamma'/\Gamma\right)'$ and using GRH, we arrive at
	\begin{align} \label{17_48pm}
	\Re\bigg\{\left(\frac{L'}{L}\right)'(s,\chi)\bigg\} = \sum_{\gamma_\chi} g_{a}(t-\gamma_\chi)+\sum_{\gamma } g_{a}(\gamma) + O(1),
\end{align}
	where $a=\sigma-\hh$, the function $g_a\colon\R\to\R$ is defined by $g_{a}(x)=\frac{x^2-a^2}{(x^2+a^2)^2}$, and the second sum runs over the imaginary part of the zeros of $\zeta(s)$. Note that we can add this sum here since $\sum_\gamma 1/\gamma^2<\infty$. 
	
	Now, we replace the function $g_a$ by the bandlimited majorants and minorants described in~\cite[Lemmas 6, 7, 8]{chirreGoncalves}. Let $\Delta\geq 1$ be a parameter such that $\pi a\Delta\geq 1$. The minorant function $m\de m_{a,\Delta}$ help us to derive the desired lower bound. We have that $m(u)\leq g_a(u)$ for all $u\in\R$, $m(u)=O\left((u^2+a^2)^{-1}\right)$ and $m\left(\frac{\ie}{2}\right)={2\pi\Delta a\,e^{(1-2a)\pi\Delta}}\left(a^2-\frac14\right)^{-1} + O\left({e^{(1-2a)\pi\Delta}\left(a-\frac{1}{2}\right)^{-2}}\right)$. Moreover $\widehat{m}(\xi)\leq 0$ for all $\xi\in\R$ and $\widehat{m}(0)=-4{\pi^2\Delta\,e^{-2a\pi \Delta}}+O\left({\Delta\,e^{-4a\pi\Delta}}\right)$. Then, we apply Lemma~\ref{Guinand-weil} as in~\eqref{18_18pm}. Since $|t|\leq \hh$, by Stirling's formula the terms with $\Gamma'/\Gamma$ are $O\left(1/a^2\right)$. Choosing $\pi\Delta=\log \log q$ we conclude that
\begin{align}  \label{1_07am}
	\Re\left\{\!\left(\frac{L'}{L}\right)'(s,\chi)\!\right\} \geq -\left(\frac{-2\sigma^2+6\sigma-2}{\sigma(1-\sigma)}\right)\log \log q \,(\log q)^{2-2\sigma}+ O\left(\dfrac{(\log q)^{2-2\sigma}}{\left(\sigma-\hh\right)(1-\sigma)^2}\right)
\end{align}
	for $\left(\sigma-\frac{1}{2}\right)\log \log q \geq 1$. Similarly, using the majorant one can get  in the range \eqref{0_13am} the upper bound
\begin{align}  \label{1_08am}
	\Re\left\{\!\left(\frac{L'}{L}\right)'(s,\chi)\!\right\} \leq \left(\frac{-2\sigma^2+2\sigma+2}{\sigma(1-\sigma)}\right)\log \log q \,(\log q)^{2-2\sigma}+ O\left(\dfrac{(\log q)^{2-2\sigma}}{\left(\sigma-\hh\right)(1-\sigma)^2}\right).
	\end{align}
	
	\begin{proof}[Proof of Theorem~\ref{0_30am}]
		Define the function $\p(t)=-\log{\left|L(s,\chi)\right|}$. Note that $
		\p'(t)=\Im\left\{\left(L'/{L}\right)(s,\chi)\right\}$, and $\p''(t)=\Re\left\{\left(L'/{L}\right)'(s,\chi)\right\}$. Let $|t|\leq 1/2$ and $q$ be sufficiently large. Denoting by $-\beta$ and $\alpha$ the right-hand sides in~\eqref{1_07am} and~\eqref{1_08am} respectively, we write $-\be\leq \p''(t)\leq\al$. Also, from~\cite[Theorem 1]{Chi} one can get the bounds $-\gamma \leq  \p(t)\leq \gamma$, where
		\[	
		\gamma = \left(\dfrac{-\si^2 +3\si - 1}{2\sigma(1-\sigma)}\right) \dfrac{(\log q)^{2-2\sigma}}{\log\log q} +\dfrac{c\left|\log\left(\sigma-\hh\right)\right|(\log q)^{2-2\sigma}}{\left(\sigma-\hh\right)(1-\sigma)^2(\log\log q)^2}.
		\]
		Let $|h|\leq 1/4$. An application of the mean value theorem gives that
		\[
		\p'(0)- \p'(-h) = \p''(h^*)\,h  \leq  \max\{h,0\}\al + \max\{-h,0\}\be.
		\]
		Here $h^* \in [-h,0]$ or $[0,-h]$. Averaging in $h$ in the interval $\left[-\nu (1-A), \nu A\right]$, we obtain
$\p'(0) \leq\frac{2\gamma}{\nu} + \frac{\nu}{2}\left(A^2 \al + (1-A)^2 \be\right)$,
		for $0<\nu<\frac{1}{4}$ and $0<A<1$. We minimize the right-hand side of the above expression by choosing $\nu=2\sqrt{{(\al^{-1}+\be^{-1})\gamma}}$ and $A={\be}{(\al+\be)^{-1}}$. Note that we have $\nu<\frac{1}{4}$ as $q\to\infty$. We conclude that
$\p'(0) \leq 2\sqrt{{\al\be (\al+\be)^{-1}\gamma }}.$
		The proof of the lower bound for $\p'(0)$ is similar. This implies the desired result.
	\end{proof}
	
	\begin{remark}
		By using $\left|\Im\left\{\left(L'/L\right)(\sigma,\chi)\right\}\right|\leq \left|\left(L'/L\right)(\sigma,\chi)\right|$, one can observe that the bound for the imaginary part in Theorem~\ref{0_30am} actually is better than Theorem~\ref{thm:LogDerL2} only when $\sigma$ is  very close to $0.5$, about $0.536$. We believe that to improve Theorem~\ref{0_30am} we  should estimate $\Im\left\{\left(L'/L\right)(\sigma,\chi)\right\}$ directly, without using the interpolation argument mentioned above. In fact, it is possible to obtain a representation for $\Im\left\{\left(L'/L\right)(\sigma,\chi)\right\}$ as in~\eqref{17_48pm}, and then one gets a function for which the Beurling--Selberg problem needs to be solved. However, for this specific function, the Beurling--Selberg problem is a hard problem and it is still open.
		\end{remark}
	
	\section{Proof of Theorem~\ref{thm:LogDerZeta}} 	
    \label{secthm:LogDerZeta}

	\begin{proof}[Proof of Theorem~\ref{thm:LogDerZeta}]
		We use Selberg's moment formula for the Riemann zeta-function~\cite[Equation (13.35)]{MV} with $s=1+\ie t$ and $y=e^{2\lambda}$ and $x=y^{-1}{\log^2t}$:
			\begin{equation}		
			\label{eq:selbergprinzeta}
			\frac{\zeta'}{\zeta}(s) = -\sum_{n\leq xy}\frac{\Lambda_{x,y}(n)}{n^s} + \frac{1}{\log{y}}\sum_{\rho}\frac{x^{\rho-s}-(xy)^{\rho-s}}{\left(\rho-s\right)^{2}} + \frac{1}{\log{y}}\sum_{n=1}^{\infty}\frac{x^{-2n-s}-(xy)^{-2n-s}}{\left(2n+s\right)^{2}}-\frac{x^{1-s}-(xy)^{1-s}}{\log y\left(1-s\right)^{2}}.
		\end{equation}
Note that the first sum is bounded exactly as in~\eqref{2_39am} replacing $q$ by $t$. Bounding as in~\eqref{0_29am}, the sum over the zeros in the right-hand side of~\eqref{eq:selbergprinzeta} is bounded by $\left(e^\lambda+1\right){\lambda^{-1}}(\log t)^{-1}\sum_{\gamma}f_{\frac{1}{2}}(t-\gamma)$, where the function $f_{\frac{1}{2}}$ is defined in~\eqref{22_55pm}. For $\Delta\geq 1$ and $a=\frac{1}{2}$ we apply Lemma~\ref{Guinand-weil} for the function $s\mapsto h(t-s)$ to get
\begin{align} \label{0_51am}
	\begin{split}
	\sum_{\gamma}f_{\frac{1}{2}}(t-\gamma)\leq\sum_{\gamma}h(t-\gamma)&\leq 2\left|h\left(t-\dfrac{\ie}{2}\right) \right|-\frac{\log\pi}{2\pi}\widehat{h}(0)+\frac{1}{2\pi}\int_{-\infty}^\infty h(u)\Re\left\{\frac{\Gamma'}{\Gamma}\left(\frac{1}{4}+\frac{\ie (t-u)}{2}\right)\right\}\textup{d}u \\
	& +\frac{1}{\pi}\sum_{n=2}^\infty\frac{\Lambda(n)}{\sqrt{n}}\left|\widehat{h}\left(\frac{\log n}{2\pi}\right)\right|,
	\end{split}
\end{align}
where $h(s)$ is the majorant function defined in~\eqref{23_32pm}. Since $t\geq 10^{30}$, for all $u\in\R$ we have
$$
\Re\left\{\frac{\Gamma'}{\Gamma}\left(\frac{1}{4}+\frac{\ie (t-u)}{2}\right)\right\}\leq \log\left|\frac{1}{4}+\frac{\ie (t-u)}{2}\right|=\log t + \dfrac{1}{2}\log\left(\dfrac{1}{16t^2}+\dfrac{1}{4}\left(1-\dfrac{u}{t}\right)^2\right)\leq \log t + \log(1+|u|).
$$
Thus, by~\eqref{23_55pm} with $a=\hh$, the third term in the right-hand side of~\eqref{0_51am} is bounded by $\frac{\widehat{h}(0)}{2\pi}\log t+0.541$. We bound the first term in the right-hand side of~\eqref{0_51am} using directly~\eqref{23_32pm}, and since $\widehat{h}(0)\geq\pi$ we arrive at
\begin{align*}
		\sum_{\gamma}f_{\frac{1}{2}}(t-\gamma)\leq \frac{\widehat{h}(0)}{2\pi}\log t +\frac{1}{\pi}\sum_{n=2}^\infty\frac{\Lambda(n)}{\sqrt{n}}\left|\widehat{h}\left(\frac{\log n}{2\pi}\right)\right|.
\end{align*}
To bound the above sum over primes, we remark from~\cite[Lemma 9]{CChiM} that $\widehat{h}(\xi)=0$ for all $|\xi|\geq \Delta$ and
\[
\widehat{h}(\xi)=\pi\left(\dfrac{e^{\pi (\Delta-|\xi|)}-e^{-\pi (\Delta-|\xi|)}}{e^{\pi\Delta}\left(1-e^{-{\pi\Delta}}\right)^2}\right) \quad \textrm{for all} \quad |\xi|\leq \Delta.
\]
Choosing $\pi\Delta=\log\log t$ and using~\eqref{16_142pm} it follows that
\begin{align*}
\frac{1}{\pi}\sum_{n=2}^\infty\frac{\Lambda(n)}{\sqrt{n}}\left|\widehat{h}\left(\frac{\log n}{2\pi}\right)\right| & = \dfrac{1}{\left(1-(\log t)^{-1}\right)^2}\sum_{n\leq (\log t)^{2}}\dfrac{\Lambda(n)}{n}-\dfrac{1}{(\log t)^{2}\left(1-(\log t)^{-1}\right)^2}\sum_{n\leq (\log t)^{2}}{\Lambda(n)} \nonumber \\
& \leq \dfrac{2\log\log t-\gamma-1+0.24(\log t)^{-1}}{\left(1-(\log t)^{-1}\right)^2}.
\end{align*}
Therefore, $\sum_{\gamma}f_{\frac{1}{2}}(t-\gamma)\leq 0.5\,{\log t}+ 2\log\log t$ (compare this estimate with~\eqref{3_45am}). We conclude that
\begin{align*}
	\left|\frac{1}{\log{y}}\sum_{\rho}\frac{x^{\rho-s}-(xy)^{\rho-s}}{\left(\rho-s\right)^{2}} \right|\leq \dfrac{e^\lambda+1}{2\lambda}+\dfrac{2\left(e^\lambda+1\right)\log\log t}{\lambda(\log t)}.
\end{align*}
Finally, we bound the last two terms in~\eqref{eq:selbergprinzeta} trivially. Therefore, taking $\lambda=2.1862$ and considering that $t\geq 10^{30}$ we obtain
\[
\left|\dfrac{\zeta'}{\zeta}(1+it)\right|\leq 2\log \log t - 0.4989
+ \left(3.091+\frac{9.06}{\log{\log{t}}}+\dfrac{2.137}{(\log\log t)^2}\right)\frac{\left(\log{\log{t}}\right)^2}{\log{t}}.
\]
Now the final result easily follows.
\end{proof}

	\section*{Acknowledgements}
	The authors would like to thank Olivier Ramar\'e for useful remarks concerning the proof of Lemma~\ref{lem:primes}, and Alessandro Languasco for pointing out a few references, as well as Tim Trudgian for taking time to read the manuscript. The project started when A.~C. was a postdoctoral fellow at NTNU. A.~C.~was supported by Grant 275113 of the Research Council of Norway, and M.~V.~H.~was supported by the Olav Thon Foundation through the StudForsk program. The authors also
	thankful to the anonymous referee for the valuable comments and suggestions.


\end{document}